\newtheorem{theorem}{Theorem}
\newtheorem{lemma}[theorem]{Lemma}
\newtheorem{proposition}[theorem]{Proposition}
\theoremstyle{remark}
\newtheorem{remark}{Remark}
\begin{document}

\title{Cubefree words with many squares}

\author{James Currie and Narad Rampersad}

\address{Department of Mathematics and Statistics \\
University of Winnipeg \\
515 Portage Avenue \\
Winnipeg, Manitoba R3B 2E9 (Canada)}

\email{\{j.currie,n.rampersad\}@uwinnipeg.ca}

\thanks{The first author is supported by an NSERC Discovery Grant.}
\thanks{The second author is supported by an NSERC Postdoctoral
Fellowship.}

\subjclass[2000]{68R15}

\date{\today}

\begin{abstract}
We construct infinite cubefree binary words containing exponentially
many distinct squares of length $n$.  We also show that for every
positive integer $n$, there is a cubefree binary square of length $2n$.
\end{abstract}

\maketitle

\section{Introduction}

A \emph{square} is a non-empty word of the form $xx$, and a \emph{cube} is a
non-empty word of the form $xxx$.  An overlap is a word of the form
$axaxa$, where $a$ is a letter and $x$ is a word (possibly empty).
A word is \emph{squarefree} (resp. \emph{cubefree}, \emph{overlap-free})
if none of its factors are squares (resp. cubes, overlaps).  For further
background material concerning combinatorics on words we refer the reader to
\cite{AS03}.

It is well-known that there exist infinite squarefree words over a
ternary alphabet and infinite overlap-free words over a binary alphabet.
Clearly, any overlap-free word is also cubefree.  Any infinite cubefree
binary word must contain squares; however, Dekking \cite{Dek76} proved
that there exists an infinite cubefree binary word containing no squares $xx$
where the length of $x$ is greater than $3$ (see also \cite{RSW05,Sha04}).
In this paper we consider instead the existence of infinite cubefree binary
words with many distinct squares.

Most known constructions of infinite cubefree words involve the iteration
of a morphism.  Words constructed in this manner are often refered to as
\emph{infinite D0L words}.  Ehrenfeucht and Rozenberg \cite{ER81,ER83a,ER83b}
proved several results concerning the factor complexity of infinite D0L
words.  They showed that any squarefree or cubefree D0L word has $O(n \log n)$
factors of length $n$.  Thus, an infinite cubefree D0L word
cannot have many distinct square factors.  By constrast, we show here how to
construct infinite cubefree binary words containing exponentially
many distinct squares of length $n$.

Other work related to the problems considered here include
\cite{AC04,CR08,CRS06}.

Let $\mu$ denote the \emph{Thue--Morse morphism}:
i.e., the morphism that maps $0 \to 01$ and $1 \to 10$.
The \emph{Thue--Morse word} is the infinite word
\[
{\bf t} = 011010011001011010010110 \cdots
\]
obtained by iteratively applying $\mu$ to the word $0$.
The Thue--Morse word is well-known to be overlap-free, and hence, a fortiori,
cubefree \cite{Thu12}.  The squares occurring in the Thue--Morse word were
characterized by Pansiot \cite{Pan81} and Brlek \cite{Brl89}
as follows.  Define sets $A = \{00,11,010010,101101\}$ and
\[
\mathcal{A} = \bigcup_{k \geq 0}\mu^k(A).
\]
The set $\mathcal{A}$ is the set of squares appearing in the Thue--Morse word.

Shelton and Soni \cite{SS85} characterized the overlap-free squares
(the result is also attributed to Thue by Berstel \cite{Ber92}),
as being the conjugates of the words in $\mathcal{A}$.  (A \emph{conjugate}
of $x$ is a word $y$ such that $x = uv$ and $y = vu$ for some $u,v$.)
Currie and Rampersad \cite{CR08} showed that the conjugates of the words in
$\mathcal{A}$ are also precisely the $7/3$-power-free squares.
Thus, there are only $7/3$-power-free squares of length $2n$ when $n$
is a power of $2$, or $3$ times a power of $2$.  By contrast, we show that
there are cubefree binary squares of length $2n$ for every positive integer
$n$.  We use this result to construct infinite cubefree binary words
containing exponentially many distinct squares.

\section{Main results}

The main results of this paper are the following two theorems.

\begin{theorem}
\label{cbfsqs}
Let $n$ be a positive integer.  There exists a cubefree binary
square of length $2n$.
\end{theorem}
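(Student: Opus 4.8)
The plan is to reduce the problem to the case of odd $n$ and then to handle that case by an explicit construction.

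First I would observe that the Thue--Morse morphism $\mu$ preserves cube-freeness: $\mu(w)$ is cubefree whenever $w$ is, a fact that reduces to a finite check on the images of the short cubefree words. Since $\mu$ is $2$-uniform it carries a square $xx$ to the square $\mu(xx)=\mu(x)\mu(x)$, whose half-length $|\mu(x)|=2|x|$ is exactly double that of $xx$. Hence, writing $n=2^{a}m$ with $m$ odd, it is enough to build one cubefree square $yy$ with $|y|=m$: applying $\mu$ a total of $a$ times then yields the cubefree square $\mu^{a}(y)\mu^{a}(y)$ of length $2n$. This disposes of all even half-lengths and leaves only odd $m$.

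For odd $m$ the overlap-free squares are useless: by the Shelton--Soni/Thue characterization recalled above, every $7/3$-power-free (in particular every overlap-free) square has half-length a power of $2$ or three times a power of $2$, so the only odd values that occur are $m\in\{1,3\}$, which are handled at once by the elements $00$ and $010010$ of $A\subseteq\mathcal{A}$. For odd $m\ge 5$ I would instead exhibit an explicit cubefree word $x$ of length $m$, assembled from short blocks so that $x$ is itself cubefree while its prefix and suffix are tightly controlled, and then prove that $xx$ contains no cube.

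To verify the cube-freeness of $xx$ I would split the possible cubes $zzz$ by length. Any such cube has $|z|<m$, since $|z|\ge m$ would force $3|z|\ge 3m>2m=|xx|$. Cubes lying inside a single copy of $x$ are excluded because $x$ is cubefree; the long cubes straddling the centre, those with $|z|$ comparable to $m$, are excluded by a standard periodicity (Fine--Wilf) argument, since such a cube would impose on a long factor of $xx$ both the period $|z|$ and the period $m$, forcing a strictly shorter common period and, on iterating, a cube inside a single copy of $x$. This leaves only the short cubes sitting across the junction, which are governed entirely by the suffix of $x$ followed by its prefix. Eliminating these uniformly in $m$ is what pins down the precise shape of $x$, and I expect this seam analysis, rather than the morphism reduction or the routine periodicity bookkeeping, to be the main obstacle.
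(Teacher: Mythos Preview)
Your reduction to odd half-length via iterated $\mu$ is valid and is a genuine simplification over the paper: the paper does \emph{not} reduce to odd $m$ but instead runs two parallel constructions, $(x0)(x0)$ for odd half-length (Theorem~\ref{odd n}) and $(x101100)(x101100)$ for even half-length (Theorem~\ref{even}), each with its own case analysis. Your observation that $\mu$ doubles the half-length while preserving cubefreeness would make the second of these unnecessary.

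That said, the heart of the argument is the odd case, and here you have an outline but not a proof. Two specific points.

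First, you never construct $x$. The paper's choice is the decisive idea: take $x$ to be a factor of the Thue--Morse word with $1001$ at both ends (Lemma~\ref{findx} supplies such factors of every even length $\ne 2,6$) and form the square $(x0)(x0)$. Because $x$ is \emph{overlap-free}, Lemma~\ref{overlap-free} forces any cube meeting $x$ with only a couple of extra letters on the sides to have period at most $2$. There is no Fine--Wilf step at all; the global structure is controlled by noting that the single inserted $0$ creates exactly one copy of the overlap $01010$ in $x0x0$, and by Lemma~\ref{occurs twice} a cube of period $\ge 4$ cannot contain a word occurring only once. This pins every putative cube to one of two short explicit windows of the form (overlap-free)$\cdot ab$, which Lemma~\ref{overlap-free} then disposes of.

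Second, your Fine--Wilf sketch is weaker than advertised. Knowing that $zzz$ inherits the global period $m$ of $xx$ and has its own period $|z|$ gives, when $3|z|\ge |z|+m-\gcd(|z|,m)$, a common period $d=\gcd(|z|,m)$ on $zzz$; but the resulting period-$d$ cube still lives inside $zzz$, which straddles the seam, so the ``iterating'' step does not obviously land inside a single copy of $x$. And for $|z|$ below roughly $m/2$ the Fine--Wilf hypothesis fails outright, so all such cubes are thrown back onto the seam analysis --- these are not just ``short'' cubes. Since that analysis is not carried out and no $x$ is specified, the proposal as it stands does not establish the theorem; the overlap-free-factor idea of the paper is precisely the missing ingredient.
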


\begin{theorem}
\label{main}
There exists an infinite cubefree binary word containing exponentially
many distinct squares of length $n$.
\end{theorem}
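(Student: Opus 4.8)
The plan is to prove Theorem~\ref{main} by leveraging Theorem~\ref{cbfsqs} to produce, for each power of~$2$, a large collection of cubefree squares, and then to splice these squares into a single infinite cubefree word. The key tension is that cubefreeness is a \emph{global} constraint: simply concatenating many individual cubefree squares need not yield a cubefree word, since new cubes can straddle the boundaries between blocks. So the real work is to exhibit an infinite cubefree word that is guaranteed to contain exponentially many distinct squares of length~$n$, for infinitely many (ideally all sufficiently large)~$n$.

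First I would try to quantify how many distinct cubefree squares of a given length actually exist. Theorem~\ref{cbfsqs} guarantees at least one cubefree square of each even length, but for Theorem~\ref{main} we need \emph{exponentially} many distinct squares of length~$n$ appearing within a single infinite word. The natural approach is to count cubefree squares of length~$2n$ directly: I would estimate, perhaps via a transfer-matrix or counting argument on the cubefree language, that the number of cubefree binary words of length~$m$ grows like~$c^m$ for some constant~$c>1$ (this is classical, as the cubefree binary language has positive entropy). From this I would argue that the number of distinct cubefree \emph{squares} $xx$ with $|xx|=n$ is also exponential in~$n$, since a random-ish cubefree word of length~$n/2$ can often be squared while remaining cubefree.

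Next I would address the central obstacle, namely assembling these squares into one infinite cubefree word so that exponentially many of them appear as factors of each relevant length. The plan is to fix a ``host'' infinite cubefree word---the Thue--Morse word~${\bf t}$ is the obvious candidate, being overlap-free and thus cubefree---and to insert or encode the desired squares into~${\bf t}$ using a carefully chosen morphism or a block-substitution scheme. Concretely, I would look for a morphism~$h$ (or a gluing operation) with the property that $h$ maps cubefree words to cubefree words, and such that the image contains, as factors, an exponentially growing family of distinct squares. One promising route is to use the characterization of squares in~${\bf t}$ via the set~$\mathcal{A}$ together with the flexibility afforded by Theorem~\ref{cbfsqs}: we have far more cubefree squares available than the $7/3$-power-free ones (which live only at lengths that are~$2$ or~$6$ times a power of~$2$), so the cubefree regime is genuinely richer and should support the exponential count.

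The hard part will be verifying that the construction remains cubefree \emph{everywhere}, including across all block boundaries introduced by the splicing, while simultaneously retaining control over the number of distinct squares of each length~$n$. I expect the cleanest way to handle this is to establish a structural lemma: a morphism~$h$ is cubefree-preserving provided it avoids creating cubes on short factors (a finite, checkable condition in the style of the standard test for square-free or overlap-free morphisms). Once such an~$h$ is in hand, I would apply it to~${\bf t}$ (or to the specific cubefree squares from Theorem~\ref{cbfsqs}) and then bound below the number of distinct length-$n$ squares in the resulting infinite word by the exponential count from the previous paragraph, completing the proof.
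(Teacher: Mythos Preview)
Your proposal correctly isolates the two obstacles---producing exponentially many cubefree squares of each length, and assembling them into one infinite cubefree word---but it does not actually resolve either one, and the paper's proof relies on concrete devices that are absent from your plan. For the counting step, the claim that ``a random-ish cubefree word of length $n/2$ can often be squared while remaining cubefree'' is not an argument: the set of cubefree $x$ with $xx$ cubefree could in principle be exponentially thin inside the cubefree language, and nothing you wrote rules that out. The paper sidesteps any such counting by starting from a \emph{single} cubefree binary square $xx$ (from Theorem~\ref{cbfsqs}), passing to the alphabet $\{0,1,2\}$ by arbitrarily replacing $0$'s in $x$ by $2$'s (this cannot create cubes, and yields at least $2^{m/2}$ distinct cubefree squares), and then mapping back to $\{0,1\}$ via a known uniform cubefree-preserving morphism of Brandenburg.

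For the assembly step, your plan to insert squares into the Thue--Morse word and then verify a finite cubefreeness test on some morphism $h$ does not explain why cubes cannot straddle arbitrarily many inserted blocks; a local test on $h$ controls images of short factors, not the global insertions you describe. The paper's mechanism is again alphabet expansion: the exponentially many squares live over $\{0,1\}$, a cubefree host word lives over the \emph{disjoint} alphabet $\{2,3\}$, and one interleaves single host letters between the squares to get a cubefree word over $\{0,1,2,3\}$; a second Brandenburg morphism, uniform and injective from four letters to two, then produces the desired binary word. The disjoint-alphabet interleaving is precisely what tames the boundary problem you flagged, and without some idea of this kind your proposal does not constitute a proof.
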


We first establish some preliminary results.

\begin{lemma}
\label{findx}
The Thue--Morse word contains a factor of the form $x =
1001x^{\prime\prime}=x'1001$ of every positive even length $n\ne 2,6$.
\end{lemma}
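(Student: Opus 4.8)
The plan is to reduce the statement to a question about the positions at which the factor $10$ occurs in ${\bf t}$, and then to realise every required gap between two such positions using the arithmetic description of ${\bf t}$. Throughout, write $t_j$ for the $j$th letter (indices from $0$), and recall that $t_j=1$ exactly when the number of $1$'s in the binary expansion of $j$ is odd.

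\emph{Reduction.} Using ${\bf t}=\mu({\bf t})$, I would first observe that $1001$ occurs only at even positions: read as a concatenation of blocks $\mu(0)=01$ and $\mu(1)=10$, an occurrence at an odd position would force the two middle letters $00$ to coincide with some block $\mu(t_i)$, which is impossible. Moreover $1001$ at position $2j$ is exactly $\mu(t_j)\mu(t_{j+1})=\mu(t_jt_{j+1})$, which equals $1001=\mu(10)$ iff $t_jt_{j+1}=10$. Hence a factor of even length $n=2m$ begins and ends with $1001$ iff $10$ occurs at two positions of ${\bf t}$ at distance $m-2$. The boundary cases $m=1$ and $m=3$ give distances $-1$ and $1$, and two occurrences of $10$ can never lie at distance $1$ (that would force $t_{j+1}$ to be both $0$ and $1$); these are precisely the excluded lengths $n=2,6$, while $m=2$ (distance $0$) is met by any single occurrence of $10$. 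It therefore suffices to prove that $10$ occurs at two positions at every distance $d\ge 2$.

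\emph{Even distances.} Again from ${\bf t}=\mu({\bf t})$, $10$ occurs at an even position $2i$ iff $t_i=1$, so two occurrences at an even distance $2e$ arise from two $1$'s at distance $e$. Writing $s(k)$ for the binary digit sum of $k$ (so $t_k=1$ iff $s(k)$ is odd), every $e\ge 1$ is such a difference: if $s(e)$ is odd take the pair $\{e,2e\}$ (equal digit sums), and if $s(e)$ is even take $\{2^K,2^K+e\}$ with $2^K>e$. This disposes of all even $d$.

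\emph{Odd distances (the obstacle).} For odd $d$ the two occurrences of $10$ must have opposite position-parities, and by the same block analysis $10$ occurs at an odd position $2i+1$ iff $t_it_{i+1}=00$. Two subcases go through immediately: extending the doubling trick, if $d$ is itself a $10$-position then $\{d,2d\}$ are two $10$-positions at distance $d$ (since $s(2d)=s(d)$ and $s(2d+1)=s(d)+1$); and if $t_dt_{d+1}=01$ then $\{2^K,2^K+d\}$ works, as then $s(d)$ is even and $s(d+1)$ odd. The stubborn cases are $t_dt_{d+1}\in\{00,11\}$, i.e.\ $s(d)\equiv s(d+1)\pmod 2$, and this is where I expect the real difficulty to lie, since the convenient doubling of the even case no longer applies and one must genuinely mix the two types of occurrence of $10$. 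My preferred way to close them is via run lengths: because ${\bf t}$ is overlap-free it contains no $000$ or $111$, so every maximal run has length $1$ or $2$, the boundaries between consecutive runs alternate between occurrences of $01$ and of $10$, and consecutive boundaries lie at distance $1$ or $2$. Since no two occurrences of $10$ are adjacent, any two of them sit at alternate boundaries, so their distance is the sum of an \emph{even} number of consecutive run lengths, each $1$ or $2$; a short analysis of these window sums then yields every $d\ge 2$. (Alternatively, one can finish arithmetically by a brief case analysis on the parities of $s(d-1),s(d),s(d+1)$, exploiting that carries make a small family of anchors $2^K+r$ suffice.)
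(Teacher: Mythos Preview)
Your reduction is correct and is, in fact, the paper's own argument read backwards: the paper takes a Thue--Morse factor of the form $10y10$ of length $m$ and applies $\mu$ to obtain $1001\mu(y)1001$ of length $2m$; you observe that every even-length factor beginning and ending with $1001$ lies at an even position and hence equals $\mu$ of a factor beginning and ending with $10$. Both routes therefore reduce the lemma to the assertion that $\mathbf t$ contains a factor of the form $10y10$ of every length $m\ge 4$ (equivalently, two occurrences of $10$ at every distance $d\ge 2$). The paper dispatches this by citing Aberkane--Currie for $m\ge 6$ and exhibiting the two remaining lengths by hand.

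Your attempt to prove that reduced claim directly, however, has a genuine gap. The even-distance case and the odd-distance subcases $t_dt_{d+1}\in\{10,01\}$ are handled correctly, but the ``stubborn'' subcases $t_dt_{d+1}\in\{00,11\}$ are not: you offer two sketches (a run-length argument and an arithmetic case split on $s(d-1),s(d),s(d+1)$) and execute neither. The run-length route is not as short as advertised: asking that every $d\ge 2$ be the distance between two occurrences of $10$ is \emph{exactly} the Aberkane--Currie lemma rephrased, and it does not follow merely from ``all runs have length $1$ or $2$''---one needs real structural information about the run-length sequence of $\mathbf t$. The arithmetic alternative is likewise only a hope: the anchors $2^K+r$ that worked so cleanly in the earlier subcases fail here precisely because $s(d)$ and $s(d+1)$ now have the same parity, and no replacement anchor is supplied. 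As written, then, the argument stops at the very point you yourself flag as the real difficulty; to complete it you must either cite Aberkane--Currie, as the paper does, or actually carry one of your two sketches through.
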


\begin{proof}
Aberkane and Currie \cite[Lemma~4]{AC04} proved that for every
integer $m \geq 6$, the Thue--Morse word contains a factor of length $m$
of the form $10y10$.  Then the Thue--Morse word also contains
the factor $\mu(10y10) = 1001\mu(y)1001$, which has length $2m$.  Finally,
we observe that $10011001$ and $1001101001$ are factors of the Thue--Morse
word of lengths $8$ and $10$ respectively.
\end{proof}

\begin{lemma}\label{overlap-free}
If $y$ is overlap-free and $ayb$ is a cube of period $p$, then $p \le |ab|$.
\end{lemma}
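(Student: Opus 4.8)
The plan is to use the fact that $ayb$, being a cube of period $p$, has period $p$ throughout, and that periodicity is inherited by factors, so the interior word $y$ also has period $p$. The strategy is then to argue that an overlap-free word of period $p$ cannot have length exceeding $2p$, and to convert this length bound into the inequality $p \le |ab|$ by way of the identity $|ayb| = 3p$.

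First I would record the elementary observation that any word $w$ of period $p$ and length $2p+1$ is itself an overlap. Writing $w = w_1 w_2 \cdots w_{2p+1}$, the period condition yields $w_1 = w_{p+1} = w_{2p+1}$ and $w_i = w_{p+i}$ for $2 \le i \le p$; setting $c = w_1$ and $u = w_2 \cdots w_p$, one checks directly that $w = cucuc$, an overlap of period $|cu| = p$. More generally, every word of period $p$ and length at least $2p+1$ contains such an overlap as a prefix, since periodicity passes to prefixes.

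Next I would apply this to $y$. Because $y$ is overlap-free and inherits period $p$ from the cube $ayb$, it cannot contain an overlap, so by the previous step we must have $|y| \le 2p$. It then remains to combine this with the length count $|a| + |y| + |b| = |ayb| = 3p$: substituting $|y| \le 2p$ gives $|a| + |b| = 3p - |y| \ge p$, that is, $p \le |ab|$, as required.

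I do not anticipate a serious obstacle; the only point demanding care is the boundary case of the length threshold, namely verifying that period $p$ together with length exactly $2p+1$ already forces an overlap, since it is precisely this sharp bound that delivers $p \le |ab|$ rather than a weaker estimate. I would also note that the argument is insensitive to whether $a$ and $b$ are single letters or longer words, and remains valid even when one of them is empty.
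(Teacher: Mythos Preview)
Your argument is correct and is essentially the paper's one-line proof unpacked: both hinge on the fact that $y$ inherits period $p$ from the cube $ayb$, so overlap-freeness forces $|y|\le 2p$, whence $|ab|=3p-|y|\ge p$. The paper phrases this contrapositively (``otherwise deleting $a$ and $b$ removes less than a full period, leaving an overlap''), but the content is identical.
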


\begin{proof}
Otherwise deleting $a$ and $b$ removes less than a full period from
$ayb$, leaving an overlap.
\end{proof}

\begin{lemma}\label{occurs twice}
If $z$ is a factor of $yyy$ where $|y|=p$ and $|z|\le p+1$, then
there are two occurrences of $z$ in $yyy$.
\end{lemma}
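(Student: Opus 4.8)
The plan is to exploit the periodicity of $w=yyy$. Since $|w|=3p$, we have $w[i]=w[i+p]$ for every position $i$ with $1\le i\le 2p$, where I index the letters of $w$ from $1$ to $3p$. The idea is that any sufficiently short factor lying near one of the two ends of $w$ can be translated by $p$ positions — to the right if it sits in the front, to the left if it sits in the back — to expose a second, distinct occurrence. The only thing that could go wrong is a factor trapped in the middle so that neither translation stays inside $w$, and the hypothesis $|z|\le p+1$ is exactly what rules this out.

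Fix an occurrence $z=w[j\,..\,j+\ell-1]$, where $\ell=|z|$, and split into two cases according to where it sits. If the occurrence lies within the first $2p$ letters, i.e. $j+\ell-1\le 2p$, then I would translate by $+p$: the word $w[j+p\,..\,j+p+\ell-1]$ is still inside $w$ (its final index is at most $3p$) and equals $z$ letter for letter by the periodicity relation, since every index of the original occurrence lies in $[1,2p]$. This gives a second occurrence starting at $j+p\ne j$. If instead the occurrence starts at position $j\ge p+1$, then I would translate by $-p$: the word $w[j-p\,..\,j-p+\ell-1]$ starts at $j-p\ge 1$ and again equals $z$ by periodicity, giving a second occurrence at $j-p\ne j$.

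The crux — and essentially the only obstacle — is to confirm that these two cases are exhaustive, which is where the bound on $|z|$ enters. Suppose both failed: then $j+\ell-1\ge 2p+1$ and $j\le p$. The first inequality gives $j\ge 2p+2-\ell$, and combining with $j\le p$ forces $2p+2-\ell\le p$, i.e. $\ell\ge p+2$, contradicting $\ell=|z|\le p+1$. Hence at least one translation applies, so $z$ occurs at least twice in $yyy$. I expect the boundary bookkeeping in this last step to be the delicate point — pinning down that the single middle position capable of blocking both shifts is excluded exactly when $|z|\le p+1$ — while the rest follows immediately from periodicity.
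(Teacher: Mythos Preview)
Your proof is correct and is essentially the same argument as the paper's, just written out with explicit indices. The paper phrases it more tersely: if $z$ lies inside some copy of $yy$ in $yyy$ then shifting by $p$ gives a second occurrence, while if $z$ is not a factor of $yy$ it must overhang the central $y$ on both sides, forcing $|z|\ge p+2$; your Case~1/Case~2 split and the exhaustiveness check are exactly this dichotomy unpacked position by position.
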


\begin{proof}
Certainly if $z$ is a factor of $yy$ it occurs twice in $yyy$. If
z is a factor of $yyy$ but not of $yy$, then $z$ must span the central
$y$ of $yyy$ and a bit more on both ends, giving $z$ a length of $p +
2$ or more.
\end{proof}

\begin{theorem}\label{odd n}
Let $x$ be a factor of the Thue--Morse word of the form
$x = 1001x^{\prime\prime}=x'1001$. Then the word $x0x0$is cubefree.
\end{theorem}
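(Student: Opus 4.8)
The plan is to show that $x0x0$ contains no cube by analyzing where a hypothetical cube could sit relative to the structure of $x$. Since $x$ is a factor of the Thue–Morse word $\mathbf{t}$, it is overlap-free, and hence cubefree; so any cube in $x0x0$ must straddle one of the inserted $0$'s or the junction between the two copies of $x0$. I would suppose for contradiction that $x0x0$ contains a cube $w = ayb$ of period $p$, where $a$ and $b$ are single letters chosen so that $y$ is a maximal overlap-free ``interior.'' The key leverage comes from Lemma~\ref{overlap-free}: because the Thue–Morse word is overlap-free, any overlap-free core forces $p \le |ab| = 2$, so the only cubes that could possibly appear have period $p \le 2$, i.e.\ are among $000$, $111$, $010101$, or $101010$ (up to the trivial short cases). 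Thus the entire problem reduces to ruling out very short periods.

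First I would verify that $x0x0$ contains no factor $000$ or $111$. The critical observation is the prefix/suffix condition $x = 1001x'' = x'1001$: the word $x$ begins with $1001$ and ends with $1001$. Therefore the two appended $0$'s are always flanked in a controlled way. Concretely, the letter immediately before each inserted $0$ is the final $1$ of the $1001$ suffix of $x$, and the letter immediately after each inserted $0$ is the initial $1$ of the $1001$ prefix of $x$ (at the $x0x$ junction) or lies outside the word (at the very end). So around each seam we see patterns like $\ldots 1001\,0\,1001 \ldots$, which contain no $000$ (the $0$ we insert is adjacent on the left to a $1$ and on the right to a $1$) and no $111$. Inside a single copy of $x$, cubes $000$ and $111$ cannot occur because $x$ is overlap-free.

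Next I would rule out periods $p = 2$, i.e.\ the factors $010101$ and $101010$. Again these cannot occur inside one overlap-free copy of $x$, so such a factor would have to cross a seam. Using the explicit prefix $1001$ and suffix $1001$ of $x$, I would check that the local window around each $0$-insertion — something like the concatenation of the last few letters of $x$, then $0$, then the first few letters of $x$ — does not contain $010101$ or $101010$. Here is where Lemma~\ref{occurs twice} and the exact squares $\mathcal{A} = \bigcup_{k\ge 0}\mu^k(A)$ of the Thue–Morse word may be invoked to control the short squares $0101$ and $1010$ that $x$ itself can contain near its ends, ensuring they cannot extend across the seam into a full period-$2$ cube.

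The main obstacle I expect is the seam analysis: organizing the finitely many cases for where a short-period cube could overlap the junctions and confirming the prefix/suffix condition $1001$ genuinely blocks each one, rather than merely most of them. The elegance of the argument hinges on Lemma~\ref{overlap-free} collapsing all long-period cubes immediately, so that only a bounded, checkable list of short patterns remains; the delicacy is entirely in verifying that the specific $1001$ boundary — as opposed to an arbitrary even-length factor — is exactly what is needed to kill those remaining short cubes at every seam.
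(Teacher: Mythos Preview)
Your use of Lemma~\ref{overlap-free} is where the argument breaks. You write the hypothetical cube as $ayb$ with $a,b$ single letters and assert that the interior $y$ is overlap-free; but $x0x0$ is \emph{not} overlap-free. The seam $\ldots 1001\cdot 0\cdot 1001\ldots$ creates the overlap $01010$ inside $x0x0$. If a cube contains this $01010$ away from its two endpoints, stripping one letter from each end does not leave an overlap-free word, and the lemma gives you no bound on $p$ at all. So your reduction ``the only cubes that could possibly appear have period $p\le 2$'' is unjustified as stated.

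The paper's proof repairs exactly this point. It first records (Remark~\ref{01010}) that $01010$ occurs \emph{exactly once} in $x0x0$, and then invokes Lemma~\ref{occurs twice}: a cube of period $p\ge 4$ has length $\ge 12$, so any length-$5$ factor of it must occur at least twice, hence such a cube cannot contain the unique $01010$. This localizes the cube to one of the two windows $x'100101$ or $101001x''0 = 10\cdot x\cdot 0$, each of which is an overlap-free word extended by at most three letters. Only now does Lemma~\ref{overlap-free} legitimately apply, and it yields $p\le 3$ (not $p\le 2$; the second window contributes three extra letters). The remaining periods $p=1,2,3$ are then eliminated by the kind of seam inspection you sketch. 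Your outline has the right shape, but it skips the localization step that makes Lemma~\ref{overlap-free} applicable, and it omits $p=3$; both are genuinely needed.
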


\begin{remark}\label{01010}
Word $01010$ occurs exactly once in $x0x0$. (Note that this word is an
overlap, and hence not a factor of the Thue--Morse word.)
\end{remark}

\begin{proof}[Proof of Theorem~\ref{odd n}]
Suppose $yyy$ is a cube in $x0x0$ with $|y|=p>0$.

\noindent{\bf Case 1:} {\it Period $p \ge 4$.}
By Lemma~\ref{occurs twice} and Remark~\ref{01010}, word $01010$ is not
a factor of $yyy$. We have two possibilities:

{\bf Case 1a:} {\it Cube $yyy$ is a factor of $x'100101$.}  This is
impossible by Lemma~\ref{overlap-free}, since $x'1001$ is
overlap-free, $|01|=2$, and $p\ge 4>2.$

{\bf Case 1b:} {\it Cube $yyy$ is a factor of $101001x^{\prime\prime}0$.}
This is again impossible by Lemma~\ref{overlap-free},
since $1001x^{\prime\prime}$ is overlap-free.

\noindent {\bf Case 2:} {\it Period $p \le 3.$} If $01010$ is a factor
of $yyy$, then one of $001010$ and $010100$ is a factor. However, neither
of these has period 1, 2 or 3; this is impossible. We conclude that
$01010$ is not a factor of $yyy$. This gives a similar case breakdown as
in Case~1.

{\bf Case 2a:} {\it Cube $yyy$ is a factor of $x'100101$.}

\hspace{.25in}{\bf Case 2ai:} {\it Cube $yyy$ is a suffix of
$x'100101$.} In this case, $p \le 2$ by Lemma~\ref{overlap-free},
since $x'1001$ is overlap-free. However, the longest suffix of
$x'100101$ of period 1 or 2 is $0101$, which is cubefree.

\hspace{.25in}{\bf Case 2aii:} {\it Cube $yyy$ is a suffix of
$x'10010$.} This forces $p = 1$, which is impossible.

{\bf Case 2b:} {\it Cube $yyy$ is a factor of $101001x^{\prime\prime}0$.}

\hspace{.25in}{\bf Case 2bi:} {\it Cube $yyy$ is a prefix of
$101001x^{\prime\prime}0$ or of $01001x^{\prime\prime}0$.} Since
$|yyy|=3p\le 9\le |01001x^{\prime\prime}|$, $yyy$ is a factor of
$101001x^{\prime\prime}$. This is symmetrical to Case~2a.

\hspace{.25in}{\bf Case 2bii:} {\it Cube $yyy$ is a factor of
$1001x^{\prime\prime}0 = x0$.} This is impossible by Case~2a.
\end{proof}

\begin{theorem}\label{even}
Let $x$ be a factor of the Thue--Morse word of the form $x =
1001x^{\prime\prime}=x'1001$. Then the word $x101100x101100$ is
cubefree.
\end{theorem}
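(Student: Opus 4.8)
The plan is to mirror the structure of the proof of Theorem~\ref{odd n} as closely as possible. Here the inserted block is $101100$ rather than a single $0$, so I would suppose $yyy$ is a cube of period $p>0$ occurring in $x101100x101100$ and derive a contradiction. The key observation, analogous to Remark~\ref{01010}, is that the word $x101100x101100 = x101100x101100$ contains, at the two junctions where $1001$ (the suffix/prefix of $x$) meets $101100$, some short overlap-like word that cannot occur in the Thue--Morse word and that occurs only a controlled number of times in the whole word. Concretely, I would first isolate the factor created at the seam $1001\cdot101100$, i.e.\ the block $1001101100$, look for a short overlap inside it (for instance a word such as $0110110$ or $1001100110$ that is an overlap and hence forbidden in $\mathbf{t}$), and record exactly how many times it occurs in $x101100x101100$.

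First I would establish a seam lemma: identify a short overlap $w$ living at the $x$-to-$101100$ boundary, verify it is not a factor of the Thue--Morse word, and count its occurrences in $x101100x101100$ (I expect it to occur only at the two seams, i.e.\ twice or a small constant number of times). Then, exactly as in Case~1, for large period $p$ I would invoke Lemma~\ref{occurs twice} to conclude that $w$ is not a factor of $yyy$, so that $yyy$ must avoid every seam and therefore lie entirely within one of the overlap-free blocks of the form $x'1001\cdot(\text{short suffix})$ or $(\text{short prefix})\cdot1001x^{\prime\prime}$. On each such block, since the core $x'1001$ (resp.\ $1001x^{\prime\prime}$) is overlap-free being a factor of $\mathbf{t}$, Lemma~\ref{overlap-free} forces $p\le|ab|$ where $ab$ is the short added material; for $p$ larger than this bound no cube can fit, giving the contradiction.

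For the small-period case (analogous to Case~2), I would argue directly: a cube of period $p\le 3$ (or whatever threshold the seam length forces) has $|yyy|\le 9$ or so, hence fits inside one of the short neighbourhoods of a seam, and I would check by the same periodicity argument used in Case~2 that no cube of period $1,2,3$ can contain the forbidden short word, and then examine the finitely many suffix/prefix positions as in Cases~2a and 2b. Each sub-case reduces either to a bounded check (the longest suffix of small period is cubefree, as with $0101$ before) or to overlap-freeness of the Thue--Morse core via Lemma~\ref{overlap-free}.

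The main obstacle I anticipate is the bookkeeping at the seams: because $101100$ is longer than a single letter, there are more boundary factors to track, the added material $ab$ in the overlap-free blocks is longer (so the period bounds from Lemma~\ref{overlap-free} are weaker), and one must ensure the chosen forbidden overlap $w$ really does occur only at the seams and nowhere inside $x$ itself. Verifying that $w$ is not an internal factor of an arbitrary admissible $x$ of the form $1001x^{\prime\prime}=x'1001$, rather than of one fixed word, is the delicate point; I would handle it by choosing $w$ short enough that its only possible occurrences straddle a $1001$-to-$101100$ or $101100$-to-$1001$ junction, using that the flanking $1001$'s are fixed by hypothesis. Once that counting is pinned down, the remaining case analysis is a routine adaptation of Theorem~\ref{odd n}.
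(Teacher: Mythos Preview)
Your high-level plan---find short forbidden words at the seams, use Lemma~\ref{occurs twice} to keep them out of $yyy$ for large $p$, then invoke Lemma~\ref{overlap-free} on the overlap-free Thue--Morse core---is exactly the paper's strategy. But your concrete execution has a gap. You look only at the seam $1001\cdot 101100$, which occurs \emph{twice} (after each copy of $x$), and you say you expect your overlap $w$ to occur ``at the two seams, i.e.\ twice.'' Lemma~\ref{occurs twice} then tells you only that if $w$ lies in $yyy$ it lies there twice; with two occurrences available in the ambient word that is no contradiction, so you cannot conclude that $yyy$ avoids every seam. You also never mention the \emph{other} kind of seam, $101100\cdot 1001$, which occurs exactly once (between the first $101100$ and the second $x$); that is where the decisive single-occurrence marker lives.

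The paper fixes this with a two-stage localization using two markers rather than one. First it uses $00100$, which sits at the unique middle seam $\ldots 101100\cdot 1001\ldots$ and occurs exactly once in $x101100x101100$; Lemma~\ref{occurs twice} then forces $yyy$ into one of the two overlapping pieces $x10110010$ or $0x101100$. Each of these pieces now contains the word $11011$ (coming from the outer seam $1001\cdot 101100$) exactly once, so a second application of Lemma~\ref{occurs twice} pushes $yyy$ into a block of the form (overlap-free Thue--Morse factor) $\pm$ (a few letters), where Lemma~\ref{overlap-free} finishes Case~1. Incidentally, the paper's markers $00100$ and $11011$ are not overlaps; they are length-$5$ non-factors of the Thue--Morse word. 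Your idea of using genuine overlaps works too (e.g.\ $1001001$ at the middle seam, $0110110$ at the outer seams), and has the advantage that ``$w\notin x$'' is immediate; but you still need both markers and the nested argument, not a single $w$.
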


\begin{remark}\label{00100}
Word $00100$ occurs exactly once in $x101100x101100$. Word $11011$
occurs exactly twice.
\end{remark}

\begin{proof}[Proof of Theorem~\ref{even}]
Suppose $yyy$ is a cube in $x101100x101100$ with $|y|=p>0$.

\noindent{\bf Case 1:} {\it Period $p \ge 4$.}
By Lemma~\ref{occurs twice} and Remark~\ref{00100}, word $00100$ is not
a factor of $yyy$. We have two possibilities:

{\bf Case 1a:} {\it Cube $yyy$ is a factor of $x10110010$.}  Word
$x10110010$ contains $11011$ as a factor exactly once. By
Lemma~\ref{occurs twice} and Remark~\ref{00100}, there are two
possibilities:

\hspace{.25in}{\bf Case 1ai:} {\it Cube $yyy$ is contained in $x101$.}
In this case, $p \le 3$ by Lemma~\ref{overlap-free}, since $x$ is
overlap-free. This is a contradiction.

\hspace{.25in}{\bf Case 1aii:} {\it Cube $yyy$ is contained in
$10110010$.} This is clearly impossible.

{\bf Case 1b:} {\it Cube $yyy$ is a factor of $0x101100$.}  Again,
word $0x101100$ contains $11011$ as a factor exactly once. Therefore,
either $yyy$ is contained in $101100$ or in $0x101$. The first
alternative evidently is impossible, while the second is ruled out by
Lemma~\ref{overlap-free}.

\noindent {\bf Case 2:} {\it Period $p \le 3.$} If $00100$ is a factor
of $yyy$, then we must have $p=3$, since $00100$ does not have period 1
or 2. However, in $x101100x101100$, the maximal factor of period 3
containing $00100$ is $1001001$, which is not a cube. We conclude that
$00100$ is not a factor of $yyy$. This gives a similar case breakdown to
Case~1:

{\bf Case 2a:} {\it Cube $yyy$ is a factor of $x10110010$.} By
Lemma~\ref{overlap-free} the word $x10$ must be cubefree. Therefore,
$yyy$ must be a suffix of one of these words:
\begin{eqnarray*}
w_8&=&x'100110110010\\
w_7&=&x'10011011001\\
w_6&=&x'1001101100\\
w_5&=&x'100110110\\
w_4&=&x'10011011\\
w_3&=&x'1001101
\end{eqnarray*}
None of the $w_n$ ends in a cube of period 1, 2 or 3. (In the case of
words $w_4$, $w_3$, the longest suffixes of period 3 have lengths 6
and 5 respectively.) It follows that $yyy$ is not a suffix of any of
the $w_n$, and this case does not occur.

{\bf Case 2b:} Cube $yyy$ is a factor of $0x101100$. Since
$|yyy|=3p\le 9\le|0x|$, $yyy$ is a factor of $0x$ or of $x101100$. The
first possibility was ruled out in Theorem~\ref{odd n}, and the second
in Case~2a.
\end{proof}

Theorems~\ref{odd n} and \ref{even} together establish Theorem~\ref{cbfsqs}.
Next we show that the number of cubefree binary squares of length $n$
grows exponentially.

\begin{proposition}
\label{exp_many}
There exist exponentially many cubefree binary squares of length $n$.
\end{proposition}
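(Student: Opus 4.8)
The plan is to count cubefree binary squares of length $n$ by exhibiting an exponentially large family of them. The key observation is that Theorem~\ref{even} produces a cubefree square $x101100x101100$ whenever $x$ is a Thue--Morse factor of the form $x = 1001x''= x'1001$, and by Lemma~\ref{findx} such factors exist for every even length $n \ne 2,6$. So the idea is to leverage the exponential number of choices for $x$ among Thue--Morse factors. If I can show that many distinct admissible prefactors $x$ give rise to distinct squares $x101100x101100$, then since the Thue--Morse word has super-polynomially many — in fact exponentially many — factors of each length satisfying the boundary conditions, the count of squares will grow exponentially in $n$ as well.

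**First I would** pin down the relevant length parameter. If $x$ has length $m$ (even, $m\ne 2,6$), then $x101100x101100$ has length $2m+12$, so it is a square of length $n = 2m+12$, i.e. $m = (n-12)/2$. **Then I would** argue that the map $x \mapsto x101100x101100$ is injective on admissible factors: two squares of this shape are equal iff their length-$(m+6)$ halves agree, and since the prefix $x$ of length $m$ is determined as the first $m$ letters of the half, distinct $x$ yield distinct squares. **The crucial step is then to** lower-bound the number of admissible Thue--Morse factors $x$ of length $m$ of the form $1001x'' = x'1001$. Here I would invoke the fact that the Thue--Morse word, being a fixed point of $\mu$, has at least $c\cdot 2^{m/k}$ distinct factors of length $m$ for suitable constants (the factor complexity of $\mathbf{t}$ grows linearly, roughly $3m$, so that alone is only polynomial — this signals that we cannot take $x$ to be an arbitrary Thue--Morse factor and must generate squares differently).

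**The hard part will be** producing genuinely exponentially many distinct squares, because the linear factor complexity of the Thue--Morse word caps the supply of suitable $x$ at $O(m)$, which is far too few. To overcome this, I would instead let $x$ range not over Thue--Morse factors themselves but over a richer construction: apply the morphism $\mu$ to an arbitrary overlap-free (or suitably constrained) binary word and use the freedom in the pre-image to manufacture exponentially many valid $x$. Concretely, I expect the construction to build $x$ by concatenating a freely chosen binary string inside a fixed scaffold that guarantees the required $1001$ boundaries and the overlap-freeness needed to re-run the argument of Theorem~\ref{even}. **The remaining step is to** verify that each resulting square is cubefree — which follows from the same case analysis as in Theorem~\ref{even}, provided the freely chosen interior keeps the relevant factors overlap-free — and that the number of free binary choices is $\Omega(2^{cn})$ for some $c>0$, which delivers the exponential lower bound and completes the proof of Proposition~\ref{exp_many}.
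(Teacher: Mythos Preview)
Your proposal has a genuine gap at precisely the point you flag as ``the hard part.'' You correctly observe that Thue--Morse factors are too few (linear complexity), but neither of your proposed fixes works. First, replacing Thue--Morse factors by images $\mu(w)$ of arbitrary overlap-free binary words $w$ does not help: the number of overlap-free binary words of length $n$ is known to grow only polynomially (Restivo--Salemi), so you still get only polynomially many admissible $x$. Second, your ``freely chosen binary string inside a fixed scaffold'' idea is incompatible with the requirement you yourself impose, namely that the interior ``keeps the relevant factors overlap-free''; any overlap-freeness constraint on an $\Omega(n)$-length block forces polynomial growth, and without that constraint the case analysis of Theorem~\ref{even} (which invokes Lemma~\ref{overlap-free} on $x$) no longer applies. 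So the proposal never actually produces an exponential family of cubefree squares.

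The paper's argument avoids this trap entirely and does not go through Theorem~\ref{even} at all. It starts from a \emph{single} cubefree binary square $xx$ of length $2m$ (existence is Theorem~\ref{cbfsqs}), assumes $0$ is the majority letter in $x$, and then replaces an arbitrary subset of the $0$'s in $x$ by $2$'s to obtain at least $2^{m/2}$ ternary words $y$; each $yy$ is cubefree because identifying $2$ with $0$ sends any cube in $yy$ to a cube in $xx$. Finally it pushes these ternary squares back to binary via Brandenburg's uniform cubefree-preserving morphism $h:\{0,1,2\}^*\to\{0,1\}^*$, yielding $2^{m/2}$ cubefree binary squares of length $12m$. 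The exponential freedom comes from the subset choice, not from any supply of overlap-free words.
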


\begin{proof}
Let $m$ be a positive integer and let $xx$ be a cubefree binary square
of length $2m$ over $\{0,1\}$.  Suppose that $0$ occurs at least as often as
$1$ in $x$.  Construct a new cubefree square $yy$ over $\{0,1,2\}$,
where $y$ is obtained from $x$ by arbitrarily replacing some of the $0$'s
in $x$ by $2$'s.  There are at least $2^{m/2}$ such squares $yy$
of length $2m$.

Let $h$ be the morphism
\begin{eqnarray*}
0 & \to & 001011 \\
1 & \to & 001101 \\
2 & \to & 011001.
\end{eqnarray*}
Brandenburg \cite[Theorem~6]{Bra83} showed that $h$ maps cubefree words to
cubefree words.  Moreover, since $h$ is uniform and injective, the set of
words $h(yy)$ consists of at least $2^{m/2}$ cubefree squares of length
$12m$.  Asymptotically, we thus have exponentially many cubefree
binary squares of length $n$, as required.
\end{proof}

We now prove Theorem~\ref{main}.

\begin{proof}[Proof of Theorem~\ref{main}]
In the proof of Proposition~\ref{exp_many} we showed that there are
at least $2^{m/2}$ cubefree binary squares of length $12m$ for every
positive integer $m$.  Let $S$ therefore be any set of cubefree squares
over $\{0,1\}$ where $S$ contains at least $2^{m/2}$ words of length $12m$
for every positive integer $m$.  Let ${\bf x} = x_1x_2 \cdots $
be any infinite cubefree binary word over $\{2,3\}$.  Construct a word
\[
{\bf w} = x_1 S_1 x_2 S_2 \cdots,
\]
where the set of $S_i$'s is equal to the set $S$, so that ${\bf w}$ is
cubefree and contains exponentially many distinct squares of length $n$.
Let $g$ be the morphism
\begin{eqnarray*}
0 & \to & 001001101 \\
1 & \to & 001010011 \\
2 & \to & 001101011 \\
3 & \to & 011001011.
\end{eqnarray*}
Brandenburg \cite[Theorem~6]{Bra83} showed that $g$ maps cubefree words to
cubefree words.  Thus, $g({\bf w})$ is cubefree and, by the uniformity and
injectivity of $g$, contains exponentially many distinct squares of length $n$.
\end{proof}

Note that Theorem~\ref{main} implies that existence of an infinite cubefree
binary word with exponential \emph{factor complexity}---i.e., with
exponentially many factors of length $n$.  Similarly, one can easily
construct an infinite squarefree word over $\{0,1,2\}$ with exponential
factor complexity.

\begin{proposition}
\label{cmplxty}
There exists an infinite squarefree word over $\{0,1,2\}$ with
exponential factor complexity.
\end{proposition}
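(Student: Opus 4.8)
The plan is to mimic the construction used to prove Theorem~\ref{main}, systematically replacing ``cubefree'' by ``squarefree'' and ``cubefree squares'' by ``squarefree blocks''. The three ingredients are: (i) a supply of exponentially many squarefree words of each length to serve as building blocks; (ii) a way to concatenate infinitely many of these blocks, using separators drawn from a \emph{disjoint} alphabet, so that the resulting infinite word is squarefree; and (iii) a uniform injective squarefree morphism collapsing the enlarged alphabet back down to $\{0,1,2\}$ without destroying either squarefreeness or the exponential number of factors. The one structural difference from Theorem~\ref{main} is that, since there is no infinite squarefree word over two letters, the separators now require three fresh symbols rather than two, so the enlarged alphabet has six letters and the final collapse is $6\to 3$ rather than $4\to 2$.

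For ingredient (i), recall that the number of squarefree words of length $m$ over $\{0,1,2\}$ grows exponentially in $m$; an explicit lower bound $\beta^m$ with $\beta>1$ follows from Brandenburg \cite{Bra83}. Let $S$ be the set of all such words. For ingredient (ii), fix an infinite squarefree word ${\bf u}=u_1u_2\cdots$ over the three fresh letters $\{3,4,5\}$, and let $w_1,w_2,\ldots$ enumerate $S$ with every word of $S$ appearing at least once. I would then form
\[
{\bf v}=u_1\,w_1\,u_2\,w_2\,u_3\,w_3\cdots,
\]
a word over $\{0,1,2,3,4,5\}$. To see that ${\bf v}$ is squarefree, suppose $zz$ were a square factor. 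If $z$ contains no letter of $\{3,4,5\}$, then $zz$ lies inside a single block $w_i$, which is squarefree, a contradiction. Otherwise both copies of $z$ contain separator letters in the same relative positions and with the same values (the two copies agree letter for letter, and the separator alphabet is disjoint from $\{0,1,2\}$), so reading off the separator letters occurring in $zz$ spells a nonempty square $\sigma\sigma$; but the separator letters of ${\bf v}$, read left to right, spell exactly ${\bf u}$, so ${\bf u}$ would contain the square $\sigma\sigma$, contradicting its squarefreeness. Hence ${\bf v}$ is squarefree.

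For ingredient (iii), let $\phi\colon\{0,1,2,3,4,5\}^*\to\{0,1,2\}^*$ be a uniform, injective, squarefree morphism of uniform image length $L$; such morphisms exist and can be extracted from the general theory of squarefree morphisms, choosing six sufficiently long squarefree ternary images that satisfy a Crochemore-type synchronization condition. Then $\phi({\bf v})$ is an infinite squarefree word over $\{0,1,2\}$. Finally, exactly as in the proofs of Proposition~\ref{exp_many} and Theorem~\ref{main}, each block $w_i$ of length $m$ is a factor of ${\bf v}$, so each image $\phi(w_i)$, of length $mL$, is a factor of $\phi({\bf v})$; by uniformity and injectivity of $\phi$ the $\beta^m$ distinct length-$m$ words of $S$ yield $\beta^m$ distinct factors of $\phi({\bf v})$ of the common length $mL$. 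Since $\beta>1$, this is exponential in the factor length, so $\phi({\bf v})$ has exponential factor complexity, as required.

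The main obstacle is ingredient (iii): producing, or reliably citing, a uniform injective morphism \emph{onto} the three-letter alphabet that provably preserves squarefreeness. The interleaving of ingredient (ii) and its squarefreeness proof are routine once the separators are taken from a disjoint three-letter alphabet, and ingredient (i) is classical; but the existence of a squarefree morphism reducing a six-letter alphabet all the way down to three letters is exactly the point that draws on the theory of squarefree morphisms and prevents the argument from being a single line.
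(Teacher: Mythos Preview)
Your argument is correct, but the paper proves this proposition by a different and slicker construction.  Instead of interleaving blocks with separators (mirroring the proof of Theorem~\ref{main}), the paper forms the \emph{perfect shuffle} ${\bf y}=w_0x_0w_1x_1\cdots$ of an arbitrary squarefree word ${\bf w}$ over $\{0,1,2\}$ with an arbitrary word ${\bf x}$ over $\{3,4\}$ having full complexity $2^n$.  Squarefreeness of ${\bf y}$ is immediate: any square $zz$ must have $|z|$ even (else the two copies begin in different sub-alphabets), and then projecting onto the even positions yields a square in ${\bf w}$.  Exponential complexity comes directly from the $2^n$ factors of ${\bf x}$.  This keeps the intermediate alphabet to five letters, and the paper then quotes an explicit uniform squarefree morphism $\{0,1,2,3,4\}\to\{0,1,2\}$ from Brandenburg~\cite{Bra83}.

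By contrast, your separator scheme forces a squarefree separator word and hence three fresh letters, pushing the intermediate alphabet to six symbols; and you then invoke the existence of a $6\to 3$ squarefree morphism without exhibiting one.  That existence claim is true (Crochemore-type criteria guarantee it), so your proof goes through, but the perfect-shuffle route avoids both the larger alphabet and the nonconstructive appeal: it needs neither the exponential count of ternary squarefree words nor a bespoke morphism, only the classical fact that $\{3,4\}$ carries a word of full complexity and a single cited $5\to 3$ morphism.
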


\begin{proof}
Let ${\bf w}$ be any infinite squarefree word over $\{0,1,2\}$
and let ${\bf x}$ be any infinite word over $\{3,4\}$ with $2^n$ factors
of length $n$ for every positive $n$.  Let ${\bf y}$ be the word obtained
by forming the \emph{perfect shuffle} of ${\bf w}$ and ${\bf x}$: that is,
if ${\bf w} = w_0w_1w_2 \cdots$ and ${\bf x} = x_0x_1x_2 \cdots$, then
define ${\bf y} = w_0x_0w_1x_1w_2x_2 \cdots$.  Clearly,
${\bf y}$ is a squarefree word with exponential factor complexity.
Let $f$ be the morphism
\begin{eqnarray*}
0 & \to & 010201202101210212 \\
1 & \to & 010201202102010212 \\
2 & \to & 010201202120121012 \\
3 & \to & 010201210201021012 \\
4 & \to & 010201210212021012.
\end{eqnarray*}
Brandenburg \cite[Theorem~4]{Bra83} showed that $f$ maps squarefree words to
squarefree words.  The uniformity and injectivity of $f$ implies that
$f({\bf y})$ is a squarefree word with exponential factor complexity,
as required.
\end{proof}

\bibliographystyle{amsalpha}

\end{document}